\documentclass[english,11pt,letterpaper]{article}
\usepackage{amssymb, amsfonts, amsmath}
\usepackage{amsthm}
\usepackage{thmtools}
\usepackage{babel}
\usepackage{pdflscape}
\usepackage{graphicx}
\usepackage{hyperref}
\usepackage[ruled,vlined,linesnumbered]{algorithm2e}

\usepackage{tikz}
\usetikzlibrary{calc}

\newtheorem{theorem}{Theorem}[section]
\newtheorem{lemma}[theorem]{Lemma}
\newtheorem{corollary}[theorem]{Corollary}

\title{Analysis of Algorithms for Moser's Problems on Sums of Consecutive Primes}
\author{Jonathan P. Sorenson \\
	Department of Computer Science and Software Engineering \\
	Butler University, Indianapolis IN, USA \\
	\href{mailto:jsorenso@butler.edu}{\texttt{jsorenso@butler.edu}}
    \and Eleanor Waiss \\
	Department of Mathematical Sciences \\
	Butler University, Indianapolis IN, USA \\
	\href{mailto:ewaiss@butler.edu}{\texttt{ewaiss@butler.edu}}
}
\date{\today}

\makeatletter 
\let\c@figure\c@table
\let\ftype@figure\ftype@table
\makeatother

\begin{document}
	\maketitle

\begin{abstract}
 In his 1963 paper on the sum of consecutive primes, 
 Moser posed four open questions related to $f(n)$, 
 the number of ways an integer $n$ can be written as a sum of consecutive primes.
 (See also problem C2 from Richard K.~Guy's \textit{Unsolved Problems in Number Theory}.)
 In this paper, we present and analyze two algorithms that,
 when given a bound $x$, construct a histogram of values of $f(n)$ for all $n\le x$.
 These two algorithms were  described, but not analyzed,
 by Jean Charles Meyrignac (2000) and Michael S. Branicky (2022).
 We show the first algorithm takes $O(x\log x)$ time using
 $x^{2/3}$ space,
 and the second has two versions,
 one of which takes $O(x\log x)$ time but only
 $x^{3/5}$ space,
 and the other which takes $O(x(\log x)^2)$ time but only
 $O( \sqrt{x\log x})$ space.
 However, Meyrinac's algorithm is easier to parallelize.
 We then present data generated by these algorithms
 that address all four open questions.
\end{abstract}

    \section{Introduction\label{sec:intro}}

We say a positive integer $n$ is \textit{gleeful} if it can be
written as the sum of consecutive primes.
So, for example, $n=10$ is gleeful since $10=2+3+5$.
Some integers have multiple gleeful \textit{representations},
or distinct ways of expressing them as such a sum;
for instance, $17$ has two representations: $2+3+5+7$ and $17$
(where we consider the empty sum of a single prime to be gleeful),
and for a somewhat larger example, we have
  \begin{equation}\label{EQN:MULTREPS}
		2357 = 773 + 787 + 797 = 461 + 463 + 467 + 487.
    \end{equation}
Let $f(n)$ count the number of gleeful representations for
the integer $n$.  
Thus, $f(6)=0$, $f(10)=1$, and $f(17)=f(2357)=2$.

In his 1963 paper, Moser \cite{Moser63} proved that the average value of $f(n)$ is asymptotically equal to $\log 2$; that is,
$$\frac{1}{x} \sum_{n=1}^x f(n) = \log 2 + o(1) $$
for large $x$.
He then posed the following four questions:
\begin{enumerate}
\item Is $f(n)=1$ infinitely often?
\item Is $f(n)=k$ solvable for every integer $k\ge 0$?
\item Does the set of numbers $n$ such that $f(n)=k$ have positive
  density for every integer $k\ge 0$?
\item Is $\limsup_{n\rightarrow\infty} f(n) = \infty$?
\end{enumerate}
These questions are then reprised in problem C2 from \cite{UPINT}, and several Online Encyclopedia of Integer Sequences (OEIS) entries address these questions:
  \cite{OEIS}:
\begin{quote} 
  A054859, A054845, A067381, 
  A054996, A054997, A054998, A054999, A055000, A055001 .
\end{quote}
To address Moser's four questions,
  in this paper, we describe and analyze variations 
  of two known algorithms that generate
  data on this function $f$.
As we will demonstrate, 
  that data suggests the answers are all "yes".
In particular, given a bound $x$, our algorithms
  compute $f(n)$ for all $n\le x$ and then
  construct a histogram $h(k,x)$ 
  (oftentimes shorthanded to $h(k)$ where $x$ is implicit)
  where
  $$ h(k) = \#\{ n\le x : f(n)=k \}.$$

A first attempt at an algorithm to generate the values $h(k)$ might be like Algorithm \ref{alg:the first}.
\begin{algorithm}
    \caption{A first approach}
    \label{alg:the first}
    \KwIn{$x$}
    \KwOut{$h[\,]$}
    Allocate an array for the values of $f$
      and initialize all values to zero.\\
    Construct all sums of consecutive primes $\le x$;
      we can easily adapt the algorithm from \cite{OSS2024} to find
      these representations in time linear in $x$.\\
    For each representation found in the previous step,
       if $n$ is the sum of the primes in that representation,
       increment $f(n)$.\\
    A scan of the array holding $f$ suffices to
      create the histogram $h$.\\
\end{algorithm}
This algorithm takes time linear in $x$,
which is optimal for this problem.
But it requires space linear in $x$ to hold the array $f$
  (plus the prefix sums array - see \cite{OSS2024}).
This is problematic on currently available hardware
  if we want to compute the histogram $h$ for, say, $x=10^{14}$.
Thus, the algorithm variants we present here focus on reducing
  memory use while keeping the running time as close to $O(x)$
  as possible.

Throughout this paper, we count arithmetic operations on integers with
$O(\log x)$ bits, including branching and array indexing,
as taking constant time, and we count space
used in bits.
In practice, all of our variables are 64-bit integers or smaller. 

We now state our main results and outline the rest of this paper.

In \S\ref{sec:puzzlealg}, we look at an algorithm due to
  Jean Charles Meyrignac (2000) that appeared in
  a puzzle solution to Problem \#46, 
  posted on the \texttt{primepuzzle.net} website
  curated by Carlos Rivera \cite{puzzleweb}.
Henceforth we call this the \textit{puzzle algorithm}.
We analyze a variant of the puzzle algorithm,
  mostly by filling in the details of Meyrinac's description.
The algorithm parallelizes nicely, has
 an interesting time/space tradeoff, and
generates values of $f(n)$ for all $n$ from an interval.
  \nocite{Sorenson2015} 
  \nocite{Miller76,Rabin80}

In \S\ref{sec:pqalg}, we look at an algorithm due to 
Michael S. Branicky (2022),
which was written in Python and appears on the OEIS \cite[A054845]{OEIS}.
Again, we analyze two variants of this priority-queue-based algorithm, depending on how primes are generated.
These algorithms have the property that $f(n)$ values are
generated one at a time in increasing order based on $n$.
Although these algorithms present a better time/space tradeoff,
they resist parallelization.
Nevertheless, we were able to run one of our priority-queue variants
up to $x=10^{12}$ using a single core,
and up to $x=10^{13}$ using 156 cores.

In \S\ref{sec:comp}, we present histograms for $x$ up to
$10^{14}$.
We also discuss our implementation details.
The curious reader may find our code and data here:
\url{https://eleanorwaiss.github.io/gleeful}. 

Finally, in \S\ref{sec:moser} we give conjectured answers to
Moser's questions based on our data.

 \nocite{OSS2024,TWS2022,Sorenson2015,Galway2000,AB2004}
  

    \section{The Puzzle Algorithm\label{sec:puzzlealg}}
\newcommand{\mcutoff}{{m_\mathit{cutoff}}}
\newcommand{\mcutoffb}{{m_{\mathit{cutoff}}^2}}
\newcommand{\mcutoffc}{{m_{\mathit{cutoff}}^3}}
\newcommand{\slide}{{\mathbf{slide}}}

The \textit{puzzle algorithm}, as we call it,
is based on the following description,
written by Jean Charles Meyrignac \cite{puzzleweb}:

 \begin{quote}\small

\begin{verbatim}
1) First, pre-compute a table of primes 
2) Lower bound = 0
3) Higher Bound=Lower Bound + size of sliding window
4) Compute all consecutive sums of n elements for every element
of the pre-computed table and counts every occurrence.
5) Display the best occurrences
6) Change the lower bound and go to 3)
\end{verbatim}

\end{quote}
Our version of this algorithm varies in several ways, but
the main idea is the same.
Values of $f(n)$ are computed for every integer $n$ in
a segment or window, and then these are scanned to add to
a running histogram.
We spend some time discussing the details of how
step (4) above can be done efficiently.
But before we dive into the details, we need to
establish some notation and prove some useful lemmas.

\subsection{Notation and Lemmas}

Let us introduce some notation.

Let $x$ be the limit for our computation.
The interval $[1,x]$ is divided into equal-length segments
(or windows)
of size $\Delta$ each (the \textit{sliding window}).
The last segment may be shorter if $\Delta$ does not divide $x$.
There are $\lceil x/\Delta\rceil$ segments total.
For a particular segment, let $x_1$ be the left endpoint and
$x_2$ be the right endpoint, but just outside the window.
We have $\Delta=x_2-x_1$, and we assume that $\Delta$ will
be a fractional power of $x$, such as $\sqrt{x}$ or $x^{2/3}$.

The basic idea is that the segment represents the values
of $f(n)$ for $x_1\le n<x_2$, which takes $\Delta$ small
integers to represent (an 8-bit integer suffices in practice).

We use the term \textit{chain} to indicate a string of
consecutive primes, and we use $m$ to denote the
number of primes, or length, of the chain.
Then $n$ is the sum of those primes.
If we write $c$ as a variable representing a chain, 
then $m(c)$ is its length and $n(c)$ is its sum.
In practice, we store the start and stop prime indices,
the length, and the sum in a chain variable.

For example, if $c:=[2,3,5,7]$, then $m(c)=4$ and
$n(c)=17$.
We will often wish to \textit{slide} a chain, which means
dropping its smallest prime and adding the next larger prime.
A slide operation preserves length but increases the sum.
Thus, $\slide(c)=[3,5,7,11]$.
If a list of primes has been precomputed,
we can implement a $\slide$ operation so that it takes
constant time.

Let $M:=M(x)$ be the length of the longest chain of
consecutive primes whose sum is $\le x$.
WLOG we may assume $2$ is in the chain so that 
$$
2+3+5+\cdots+p_M \le x
< 2+3+5+\cdots+p_M+p_{M+1}.
$$
For any chain of length $m$ with sum $n\le x$,
we have $m\le M$.

  Let $p(m)=p(m,x)$ be the largest prime that participates in any
  chain with sum $n\le x$ of length $m$.
  \begin{lemma}\label{lemma:pm}
  We have
  $$ \frac{x}{m+2} < p(m,x) \le \frac{2x}{m}(1+o(1)).$$
  \end{lemma}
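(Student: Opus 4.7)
The plan is to handle the two inequalities separately.

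For the lower bound $p(m,x) > x/(m+2)$, I would argue by contradiction. Write $p = p(m,x) = p_i$ and fix a chain $c=[p_{i-m+1},\ldots,p_i]$ with sum $n_1 \le x$. By maximality of $p$, the slid chain $\slide(c)=[p_{i-m+2},\ldots,p_{i+1}]$ must have sum $n_2 > x$, since otherwise the prime $p_{i+1} > p$ would itself belong to a valid chain of length $m$ with sum $\le x$. The identity $n_2 = n_1 - p_{i-m+1} + p_{i+1}$ then rearranges to
$$p_{i+1} > x - n_1 + p_{i-m+1}.$$
Now suppose for contradiction that $p \le x/(m+2)$. Every prime in $c$ is at most $p$, so $n_1 \le mp \le mx/(m+2)$, and the displayed inequality forces $p_{i+1} > 2x/(m+2) \ge 2p$. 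This contradicts Bertrand's postulate, which guarantees $p_{i+1} < 2p_i = 2p$.

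For the upper bound $p(m,x) \le (2x/m)(1+o(1))$, the plan is to bound the chain sum from below using the prime number theorem. Writing $p = p_i$ and using the PNT consequence $p_j \sim j\log j$, a Riemann-sum comparison shows
$$\sum_{j=i-m+1}^{i} p_j \sim \int_{i-m}^{i} t\log t\,dt = mp\Bigl(1 - \frac{m}{2i}\Bigr)(1+o(1)).$$
Since a chain of length $m$ uses $m$ distinct primes, we have $m \le i$, so the factor $1 - m/(2i)$ is at least $1/2$. Combining this with the constraint that the chain sum is at most $x$ yields $(mp/2)(1+o(1)) \le x$, i.e. $p \le (2x/m)(1+o(1))$.

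The main delicacy is the uniformity in $m$ for the upper bound. When $m = o(i)$ the chain consists entirely of primes close to $p$ and one actually obtains the sharper $p \le (x/m)(1+o(1))$; but when $m$ approaches $M$ (so that the chain reaches down to the small primes) the factor $1 - m/(2i)$ drops to $1/2$ and the constant $2$ is genuinely necessary, as witnessed by the maximal chain $2+3+\cdots+p_M$.
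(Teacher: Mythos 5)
Your proof is correct and follows essentially the same route as the paper: the lower bound combines Bertrand's postulate with the fact that all $m$ primes of the chain are at most $p(m,x)$ (you phrase it as a contradiction, the paper as a direct estimate $x<(m+2)p(m)$), and the upper bound uses the prime number theorem to show the chain sum is at least $\tfrac{m}{2}p(m)(1+o(1))$, via an integral rather than the paper's closed form $\sum_{i\le u}p_i\sim u^2\log u/2$. Your closing remark about the factor $2$ being tight only when $m$ is comparable to $i$ is a nice observation that the paper leaves implicit.
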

  \begin{proof}
      There exists an integer $\ell>0$ such that
      $$
      p_{\ell+1}+p_{\ell+2}+\cdots +p_{\ell+m} \le x
      < p_{\ell+1}+p_{\ell+2}+\cdots +p_{\ell+m} + p_{\ell+m+1},
      $$
      so that $p(m,x)=p_{\ell+m}$ here.

      For the lower bound, we have
      \begin{eqnarray*}
      x
      &<& p_{\ell+1}+p_{\ell+2}+\cdots +p_{\ell+m} + p_{\ell+m+1} \\
      &\le& p_{\ell+1}+p_{\ell+2}+\cdots +p_{\ell+m} + 2p_{\ell+m} \\
      &\le& (m+2) p_{\ell+m} = (m+2) p(m)
      \end{eqnarray*}
      by Bertrand's postulate.

      For the upper bound, we note that
      \begin{equation}\label{EQ:BS}
      \sum_{i=1}^u p_i \sim \sum_{i=1}^{u} i \log i
       \sim \frac{u^2 \log u}{2}
      \end{equation}
      (see Bach and Shallit \cite{BS}, chapter 2).
      We then have
      \begin{eqnarray*}
          x&\ge&
          p_{\ell+1}+p_{\ell+2}+\cdots +p_{\ell+m} \\
          &\sim& \frac{1}{2} ( (\ell+m)^2\log(\ell+m)-\ell^2\log \ell) \\
          &\sim& \frac{m}{2} p_{\ell+m} + \frac{\ell}{2}(p_{\ell+m}-p_\ell) \\
          &\ge& \frac{m}{2} p(m),
      \end{eqnarray*}
      by the prime number theorem.
  \end{proof}

 We have the following estimate for $M=M(x)$:
    \begin{lemma}\label{lemma:M}
        $$M(x)\sim \frac{2\sqrt{x}}{\sqrt{\log x}}.$$
    \end{lemma}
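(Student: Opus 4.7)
The plan is to apply the asymptotic sum of primes from (\ref{EQ:BS}) with $u=M$, and then solve the resulting asymptotic equation for $M$.

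First, I would use the definition of $M=M(x)$ with the chain starting at $2$:
\[
\sum_{i=1}^{M} p_i \le x < \sum_{i=1}^{M+1} p_i.
\]
By (\ref{EQ:BS}), both sides are asymptotic to $\tfrac{1}{2} u^2 \log u$ with $u=M$ and $u=M+1$ respectively. Since $M \to \infty$ as $x \to \infty$, we have $(M+1)^2 \log(M+1) \sim M^2 \log M$, so the squeeze gives
\[
\frac{M^2 \log M}{2} \sim x.
\]

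Next I would solve this asymptotic relation for $M$ in terms of $x$. Taking logarithms of $M^2 \log M \sim 2x$ yields $2\log M + \log\log M \sim \log(2x) \sim \log x$, so $\log M \sim \tfrac{1}{2}\log x$. Substituting back,
\[
M^2 \sim \frac{2x}{\log M} \sim \frac{4x}{\log x},
\]
and taking square roots gives the claimed $M \sim 2\sqrt{x}/\sqrt{\log x}$.

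The main obstacle is not technical depth but careful bookkeeping of the asymptotic manipulations: verifying that the upper and lower sums pinch to the same $\tfrac{1}{2} M^2 \log M$, and justifying the iterative substitution $\log M \sim \tfrac{1}{2}\log x$ back into $M^2 \log M \sim 2x$. Both steps are routine given (\ref{EQ:BS}) and the prime number theorem, so no deeper machinery is needed beyond what Lemma~\ref{lemma:pm} already invoked.
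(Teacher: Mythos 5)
Your proposal is correct and follows essentially the same route as the paper: both apply the asymptotic (\ref{EQ:BS}) to the chain of the first $M$ primes to obtain $\tfrac{1}{2}M^2\log M \sim x$, deduce $\log M \sim \tfrac{1}{2}\log x$ by taking logarithms, and substitute back to solve for $M$. Your write-up simply spells out the final substitution step that the paper compresses into ``the result then follows.''
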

    \begin{proof}
    Consider a chain of length $M$ that  
    consists of the first $M$ primes.
        From (\ref{EQ:BS}), we have
        $$
        \frac{M^2\log M}{2} \le x(1+o(1)) < \frac{(M+1)^2\log (M+1)}{2}.
        $$
        Taking logarithms, we deduce that $\log x \sim 2\log M$.
        The result then follows.
    \end{proof}

    And we have the following bound on sums of slided chains.

    \begin{lemma}\label{lemma:slide}
        If $c$ is a chain of length $m$ and sum $n$,
        then
        $n(\slide(c))-n(c)$
        has average value $\Theta( m \log (n/m) )$.
    \end{lemma}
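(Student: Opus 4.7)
The plan is to reduce the slide difference to a simple expression in consecutive primes, then combine the prime number theorem with Lemma~\ref{lemma:pm} to extract both factors $m$ and $\log(n/m)$.

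First I would write $c = [p_{\ell+1}, p_{\ell+2}, \ldots, p_{\ell+m}]$ for the appropriate starting index $\ell\ge 0$, so that $\slide(c) = [p_{\ell+2}, \ldots, p_{\ell+m+1}]$. Direct cancellation then gives
$$n(\slide(c)) - n(c) \;=\; p_{\ell+m+1} - p_{\ell+1} \;=\; \sum_{i=1}^{m} \bigl(p_{\ell+i+1} - p_{\ell+i}\bigr),$$
which exposes the slide difference as a sum of $m$ consecutive prime gaps.

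Next, by the PNT the average gap near $p_k$ is $\sim \log p_k$, and $\log p_k$ varies by only a $1+o(1)$ factor across a window of length $m$ (using the integrated estimate~(\ref{EQ:BS}) to handle the case when $\ell$ is comparable to or smaller than $m$). Hence, on average over the starting index, the telescoping sum is $\Theta(m\log p_{\ell+m})$. To translate $\log p_{\ell+m}$ into $\log(n/m)$, I would apply Lemma~\ref{lemma:pm} with $n$ playing the role of the upper bound: since $p_{\ell+m} = p(m,n)$, we have $n/(m+2) < p_{\ell+m} \le (2n/m)(1+o(1))$, so $p_{\ell+m} = \Theta(n/m)$ and therefore $\log p_{\ell+m} = \Theta(\log(n/m))$. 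Chaining the two estimates yields the stated $\Theta(m\log(n/m))$.

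The main obstacle is making ``average'' precise: a single gap $p_{\ell+m+1} - p_{\ell+1}$ is erratic, so the claim is really a statement about the mean over starting indices $\ell$ in a range (equivalently, over chains of fixed length $m$ whose sums lie in a given interval), where the PNT supplies the uniformity needed to average $m$ consecutive gaps. A secondary technicality is ensuring $\log(n/m)$ is bounded away from $0$, which is automatic as soon as $m\ge 2$ since then $n/m\ge 5/2$.
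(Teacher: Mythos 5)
Your proposal is correct and follows essentially the same route as the paper: both identify the slide difference as $p_{\ell+m+1}-p_{\ell+1}$, invoke Lemma~\ref{lemma:pm} to place the relevant primes at $\Theta(n/m)$, and use the prime number theorem to average $m$ consecutive gaps of size $\log(n/m)$ each. Your version merely makes the telescoping over prime gaps and the meaning of ``average'' more explicit than the paper does.
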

    \begin{proof}
        A single $\slide$ operation changes the sum of the chain
    by $p-q$, where $p$
    is the newest prime added to the chain and $q$
    was the smallest prime just removed from the chain
    by sliding.
    By Lemma \ref{lemma:pm}, we know $p=\Theta(n/m)$,
    and so by the prime number theorem, 
    we know consecutive primes in the chain are, on average,
    $\log(n/m)(1+o(1))$ apart.
    Thus $p \approx q+m\log(n/m)$, or
$$
n(\slide(c))-n(c) = p-q= \Theta( m\log(n/m))
$$
on average.
    \end{proof}
    \begin{corollary}\label{cor:work}
        The number of chain sums $n$ from chains of length $m$
        that fall in a segment
        of length $\Delta$ is, on average,
        proportional to $$\Delta/(m\log (x/m)),$$
        where $x$ is the midpoint of the interval.
    \end{corollary}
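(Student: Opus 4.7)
The plan is to view the chain sums of length $m$ near magnitude $x$ as a strictly increasing sequence obtained by repeated sliding, and then to convert the average-gap estimate of Lemma~\ref{lemma:slide} into a counting estimate by a standard density argument.

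First, I would fix $m$ and list the chain sums of length $m$ in increasing order: these are precisely $n(c_0) < n(c_1) < n(c_2) < \cdots$, where $c_0$ is the chain $[p_1,\ldots,p_m]$ and $c_{i+1}=\slide(c_i)$. By Lemma~\ref{lemma:slide}, the gap $n(c_{i+1})-n(c_i)$ has average value $\Theta(m\log(n(c_i)/m))$. Restricting attention to those indices $i$ for which $n(c_i)$ lies in the segment of length $\Delta$ around the midpoint $x$, every such $n(c_i)/m$ is within a constant factor of $x/m$, so the logarithm $\log(n(c_i)/m)$ is $\log(x/m)(1+o(1))$. Hence the average gap between consecutive chain sums of length $m$ within this window is $\Theta(m\log(x/m))$.

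Next, the number of terms of an increasing sequence that fall in a window of length $\Delta$, when the average gap between consecutive terms is $g$, is proportional to $\Delta/g$. Applying this with $g=\Theta(m\log(x/m))$ yields the claimed estimate
\[
\frac{\Delta}{m\log(x/m)}.
\]

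The only subtlety I anticipate is justifying the transition from \emph{average gap} to \emph{count in a fixed window}: strictly speaking, an average gap bound of $g$ over a long range gives the count up to a constant factor in any subinterval large enough that the averaging is meaningful, which here requires $\Delta \gg m\log(x/m)$. For the regime of interest (where $\Delta$ is a fractional power of $x$ and $m\le M=O(\sqrt{x/\log x})$) this holds comfortably, so the proportionality statement follows. If $\Delta$ is smaller than a single average gap, the corollary is still valid as an average statement over many windows of size $\Delta$ tiling $[1,x]$, which is the way it will ultimately be applied in the complexity analysis of the puzzle algorithm.
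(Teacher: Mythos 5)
Your argument is correct and matches the paper's intended reasoning: the corollary is stated without explicit proof, as an immediate consequence of Lemma~\ref{lemma:slide} obtained by dividing the window length $\Delta$ by the average gap $\Theta(m\log(x/m))$ between consecutive chain sums of length $m$. Your added remark on when the average-gap-to-count conversion is legitimate (and that the statement should be read as an average over the segments tiling $[1,x]$) is a reasonable and accurate elaboration of what the paper leaves implicit.
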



    \subsection{Algorithm Description}

    We are now ready to describe our version of the puzzle algorithm.  The input is a bound $x$,
    and the output is an array representing the histogram $h$, where $h(k)=\#\{ n\le x : f(n)=k\}$.

    In step (4) of Meyrignac's description,
    the basic idea is to work through the chains from maximum
    length $M$ down to length 1, generating all relevant chains of
    the current length $m$ before moving on.
    If the length $m$ is "large", we build the first chain
    of length $m$ from the
    previous first chain of length $m+1$ by dropping the largest prime
    and then sliding as needed.
    (If we were to build the chain of length $m$ from scratch,
    this takes $\sum m \approx M^2$ time per segment,
    which is way too slow.)
    
    The original chain of length $M$ is constructed by
    finding primes until the sum is maximal but less than $x_2$.

    When $m$ gets small enough, the first chain of length $m$
    has no overlap with the previous one of length $m+1$,
    and may even be far enough away that making the
    first $m$-length chain from
    scratch is more efficient.
    This is how we define "small" values of $m$.
    We define $\mcutoff$ as the largest "small" value of $m$.

    To set a more precise value for $\mcutoff$,
    we set the work threshold
    to the the time needed to process the segment for that
    value of $m$ after that first chain is found. 
    By Corollary \ref{cor:work}, this is
    $O(\Delta/(\mcutoff\log x))$ operations.

    A key in the analysis is estimating this cutoff point,
    which we do now.

    \begin{lemma}\label{lemma:mcutoff} On average, we have
        $$ \mcutoff = \Theta( x_1/\Delta ).$$
    \end{lemma}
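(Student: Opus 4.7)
The plan is to locate $\mcutoff$ by balancing two costs. The first is the time to build the first chain of length $m$ by sliding from the first chain of length $m+1$; the second is the work threshold declared in the paragraph preceding the lemma, namely $\Theta(\Delta/(m\log x))$ operations per length-$m$ sweep, as given by Corollary \ref{cor:work}. The cutoff $\mcutoff$ is the value of $m$ at which these two costs match: for smaller $m$ sliding would be more expensive than the subsequent segment scan, so one prefers to build the length-$m$ chain from scratch.

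For the sliding cost, I would start from the first length-$(m+1)$ chain whose sum lies in $[x_1,x_2)$ and drop its largest prime. By Lemma \ref{lemma:pm}, that prime has size $\Theta(x_1/m)$, so the remaining length-$m$ chain has sum about $x_1-\Theta(x_1/m)$. Slides then advance the chain until its sum climbs back to $x_1$. By Lemma \ref{lemma:slide}, each slide raises the sum by $\Theta(m\log(x_1/m))$ on average, so the expected total number of slides is
$$
\Theta\!\left(\frac{x_1/m}{m\log(x_1/m)}\right) \;=\; \Theta\!\left(\frac{x_1}{m^2\log(x_1/m)}\right).
$$
Equating this with the threshold $\Theta(\Delta/(m\log x))$ and absorbing the logarithmic factors (which are of the same order whenever $m$ is polynomially smaller than $x$) into the $\Theta$ gives $x_1/m=\Theta(\Delta)$, hence $\mcutoff=\Theta(x_1/\Delta)$.

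The main obstacle will be handling the averaging carefully. Lemma \ref{lemma:slide} controls only the mean slide length, so the number-of-slides estimate is itself an average statement; I would need to argue that for $m$ near $\mcutoff$ the distance $\Theta(x_1/m)$ is large enough to accommodate enough slides that the average is meaningful rather than dominated by variance. I would also verify the definition is internally consistent at the boundary by checking that the alternative — constructing a length-$m$ chain from scratch in $O(m)$ operations — indeed stays below the same threshold for $m<\mcutoff$, so that switching strategies there is genuinely advantageous.
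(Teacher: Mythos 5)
Your proposal is correct and follows essentially the same route as the paper: the paper likewise computes the number of slides needed to recover the $\Theta(x_1/m)$ deficit created by dropping the largest prime (via Lemmas \ref{lemma:pm} and \ref{lemma:slide}), equates that count with the work threshold $\Theta(\Delta/(\mcutoff\log x))$ from Corollary \ref{cor:work}, and solves for $\mcutoff$. Your version is a slightly streamlined form of the same balancing argument.
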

    \begin{proof}
        Let $c_a$ be the first chain of length $m$ we use (after),
        and $c_b$ be the first chain of length $m+1$ (before).
        Both chains have sums near $x_1$, as in
        \begin{eqnarray*}
            n(c_b)\ge x_1 &\mbox{ and }&n( \slide^{-1}(c_b))<x_1 \\
             n(c_a)\ge x_1 &\mbox{ and }&n( \slide^{-1}(c_a))<x_1 
        \end{eqnarray*}
        are both true.
        By Lemma \ref{lemma:slide}, we can deduce that
        \begin{equation} \label{eq:ablimit}
            |n(c_a)-n(c_b)|=O(m\log (x/m))
        \end{equation}
        on average.
        
        By Lemma \ref{lemma:pm}, the largest prime
        in $c_b$ is $p(m+1,x_1)=\Theta(x_1/m)$.
        Let $s$ denote the number of $\slide$ operations to
        obtain $c_a$ from $c_b$ after $p(m+1,x_1)$ is removed
        from $c_b$.  Then we have
        $$
        n(c_a) = n(\slide^s(c_b))-p(m+1,x_1).
        $$
        Note that removing $p(m+1,x_1)$ before or after sliding
        gives the same sum.

        Sliding a chain of length $m$ with sum near $x$
        changes its sum by $\Theta(m\log(x/m))$ by 
        Lemma \ref{lemma:slide}.
        This gives us
        $$
        n(c_a) = n(c_b)
        +\Theta(sm\log(x_1/m))-p(m+1,x_1)
        $$
        or
        $$
        p(m+1,x_1)=  n(c_b)-n(c_a)
        +\Theta(sm\log(x_1/m))
        $$
        on average.
        Using Equation (\ref{eq:ablimit}) then gives
        $$p(m+1,x_1)=\Theta( sm\log (x_1/m) )$$
        or, with Lemma \ref{lemma:pm},
        $$
        \frac{x_1}{\log x_1} \approx s\cdot \mcutoff2
        $$
        on average for when $m=\mcutoff$.
        But, we have
        $$
        s=\Theta\left(\frac{\Delta}{\mcutoff\log x}\right),
        $$
        as the number of slide operations matches the work
        for the segment for that value of $m$
        by Corollary \ref{cor:work}.
        A bit of simplification completes the proof.        
    \end{proof}

    With an estimate for $\mcutoff$ in hand, we are now
    ready to describe the Puzzle Algorithm is detail (Algorithm \ref{alg:puzzle}).


\newcommand{\esum}{\texttt{sum}}
\newcommand{\len}{\texttt{length}}
\newcommand{\pmin}{\texttt{pmin}}

\begin{algorithm}
    \label{alg:puzzle}
    \caption{The Puzzle Algorithm}
    \KwIn{$x$, $\Delta$}
    \KwOut{$h[\,]$}
    Initialize $h:=[\,]$, of length 20, to zeroes\;
    Construct primes up to $O(\Delta)$\; \label{line:largeprimes} 
    Initialize $f:=[\,]$, of length $\Delta$, to zeroes\;
    $x_1:=2, x_2:=\Delta+x_1$\; \label{line:lastsetupline} 
    \While{$x_1\le x$}{ \label{line:mainloop} 
        \lIf{$x_2>x+1$}{$x_2:=x+1$}
        Clear $f$ to zeroes\;
        Build chain $c$ such that $c.\len\leq M(x_2)$, $c.\esum\leq x_2$, $c.\pmin:=2$, and $2+\slide(c).\esum>x_2$\;
        $\mcutoff:=0$\;
        \While{$c.\len > \mcutoff$}{ \label{line:fc1}
            $s:=\Delta/(c.\len\cdot\log(x_1/c.\len))$\;
            $i:=0$\;
            \While{$c.\esum<x_1$}{ \label{line:rwis1} 
                Increment $i$\;
                $c:=\slide(c)$\;
                \lIf{$i\geq s$}{$\mcutoff:=m$}
            }
            $c_0:=c$\;
            \While{$c.\esum < x_2$}{ \label{line:rwis2}
                Increment $f[c.\esum-x_1]$\;
                $c:=\slide(c)$\;
            }
            $c:=c_0$\;
            Drop the largest prime from $c$\; \label{line:remove}
        }
        $m:=\mcutoff-1$\;
        \While{m>0}{ \label{line:fc2}
            Find all primes in $[p(x_1,m)-m\log(x_2),p(x_2,m)]$\; \label{line:smallprimes} 
            Build chain $c$ such that $c.\len=m$, $x_1\leq c.\esum \leq x_2$, and $\slide^{-1}(c).\esum < x_1$\;  \label{line:build}
            \While{$c.\esum < x_2$}{ \label{line:rwis3} 
                Increment $f[c.\esum-x_1]$\;
                $c:=\slide(c)$\;
            }
            Decrement $m$\;
        }
        \lFor{each $n$, $x_1\leq n<x_2$}{Increment $h[f[n-x_1]]\;$}
        $x_1:=x_2$\;
        $x_2:=x_2+\Delta$\;
    }
\end{algorithm}

\subsection{Time and space analysis}

In our analysis, we assume that $\Delta$ is a fractional
power of $x$ and, in fact, at least $M(x)$.
We assume that $x_2\approx x$.
The number of loop iterations when $x_2$ is substantially smaller
than $x$ is limited, and we can assume, therefore, that this case
contributes negligibly.

The space used by our algorithm will be at least $\Delta$ bits.
If possible, we want to keep space usage as low as possible
while keeping our running time as close to linear in $x$
as we can.

Since the average value of $f(n)$ is constant, most values of
$f(n)$ can be represented in a constant number of bits,
with larger values stored in a secondary data structure
to keep the overall space use linear.
In practice, we simply used an array of 8-bit integers.
We have not yet found an integer $n$ with $f(n)>14$.

Throughout, we mention the use of average-case analysis.
For example, consecutive primes near $x$ differ by $\log x$ on average, by
the prime number theorem.
For a particular step on a particular segment, this will vary, but over the course of
the entire algorithm, significant variations from the average must cancel out,
as the algorithm examines all primes and all chains up to $x$.

\subsubsection{Preprocessing}
It only takes $O(\Delta)$ time to do steps
  1 to \ref{line:lastsetupline}.
In particular, finding the primes up to $O(\Delta)$
  can be done in linear time using any number of prime
  sieves, including the sieve of Eratosthenes with a wheel
  or the Atkin-Bernstein sieve \cite{AB2004}.
In practice, we use an incremental sieve \cite{Sorenson2015}
  to find more primes as needed,
  since we don't know the implied constant in $O(\Delta)$.

\subsubsection{Main Loop}
We move on to the the main loop,
  Line \ref{line:mainloop}, which repeats $O(x/\Delta)$ times.

Inside the main loop, all the steps outside of processing
  large and small values of $m$ can be easily seen to take
  at most $O(\Delta)$ time, especially when we recall
  that $M\ll \Delta$.

\subsubsection{Large $m$}
Processing chains with large $m$ begins on Line \ref{line:fc1}. 
Everything, aside from the loops starting on Lines \ref{line:rwis1} and
\ref{line:rwis2}, takes constant time.
Summing over all large $m$ gives us $O(M)=O(\Delta)$.

By Corollary \ref{cor:work}, the loop at Line
\ref{line:rwis2} takes at most
$O(\Delta/(m\log(x/m)))$ time.
We can simplify this to $O(\Delta/(m\log x))$
because $1\le m \le M=\Theta(\sqrt{x/\log x}$ by 
Lemma \ref{lemma:M}, so that $\log x = \Theta(\log(x/m))$.

The value of $s$ in Line \ref{line:rwis1} is chosen
specifically to make sure that the cost of
the loop at Line \ref{line:rwis1} is not more than the cost
of the one at Line \ref{line:rwis2}.

For large $m$, then, we have a cost proportional to
$$
\frac{x}{\Delta} \left(
\Delta + \sum_{m=\mcutoff}^M \frac{\Delta}{m\log x}
\right).
$$
By Lemma \ref{lemma:M}, this simplifies to $O(x)$.

\subsubsection{Small $m$}
Processing chains with small $m$ begins on Line \ref{line:fc2}. 
For small $m$, the largest prime of any chain of length $m$
will range from $p(m,x_1)$ to $p(m,x_2)$.
This explains the need for Line \ref{line:smallprimes}.
By Lemma \ref{lemma:pm} this is from roughly $x_1/m$ to
$x_2/m$, or an interval of size $O(\Delta/m)$.
The smaller primes of any chain either fall in that interval,
or just below it by roughly $m\log x$ by the prime number
theorem.

Now, the Atkin-Bernstein sieve will find primes of size
$y$ on an interval of size $y^{1/3}$ or larger in time linear in 
the interval length,
using Galway's improvements \cite{Galway2000}.
Here, $y=p(m,x_2)=O(x/m)$, so we need
\begin{eqnarray*}
    \frac{\Delta}{m}+m\log x &\gg& (x/m)^{1/3}
    \quad \mbox{ or }\\
    m &\ll& \Delta^{3/2}/x^{1/2}.
\end{eqnarray*}
Using Lemma \ref{lemma:mcutoff} and a bit of work,
this gives us an upper bound of $x^{2/5}$ for $\mcutoff$,
as $m\le \mcutoff$ for "small" $m$.
As we will see later, $\mcutoff\approx x^{1/3}$,
and so the Atkin-Bernstein sieve with Galway's improvements
will work fine here.
(Note that in practice we used something akin to the
pseudosquares prime sieve \cite{Sorenson06}, 
but with Miller-Rabin prime tests
using particular bases to achieve deterministic primality testing.)

Summing this over all segments and over all $m\le \mcutoff$ gives
a time bound of
\begin{eqnarray*}
    \frac{x}{\Delta} \sum_{m=1}^\mcutoff
    \left(\frac{\Delta}{m}+m\log x\right)
    &\sim& x \log \mcutoff + \frac{x\log x}{2\Delta} \cdot
    m^2_{\mathit{cutoff}}
\end{eqnarray*}
To keep this close to linear, we want
$$
\Delta \gg m^2_{\mathit{cutoff}} .
$$

Line \ref{line:build} can be done in $O(m)$ time now
that the necessary primes are available.

Line \ref{line:rwis3} takes at most
$O( \Delta/(m\log x))$ time by Corollary \ref{cor:work},
since the primes are available so that
$\slide$ operations take constant time.

\begin{theorem}\label{thm:puzzle}
  On input $x>0$,
    the Puzzle Algorithm computes a histogram $h$ as described above
    in $O(x\log x)$ arithmetic operations and
    $O( x^{2/3})$ space.
\end{theorem}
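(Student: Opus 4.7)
The plan is to aggregate the three time contributions already derived in the running analysis---preprocessing, the large-$m$ loop, and the small-$m$ loop---express each in terms of $\Delta$ alone using Lemma~\ref{lemma:mcutoff}, and then pick $\Delta$ to minimize space subject to keeping the time close to linear.

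First I would collect the linear-in-$x$ contributions. The preprocessing runs in $O(\Delta)$; the main-loop overhead outside of chain processing costs $O(\Delta)$ per iteration over $O(x/\Delta)$ iterations, for $O(x)$ total; and the large-$m$ analysis has already been reduced to $O(x)$ using Lemma~\ref{lemma:M}. All of these are absorbed by the eventual $O(x\log x)$ bound.

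The real choice is driven by the small-$m$ contribution, which the text bounds by
$$ x \log \mcutoff \;+\; \frac{x \log x}{2\Delta}\,\mcutoffb. $$
Substituting $\mcutoff = \Theta(x/\Delta)$ from Lemma~\ref{lemma:mcutoff} turns this into $O(x\log(x/\Delta)) + O(x^3 \log x / \Delta^3)$. The first term is $O(x\log x)$ for any nontrivial $\Delta$; the second is $O(x\log x)$ exactly when $\Delta = \Omega(x^{2/3})$. Since the space used by the algorithm is $\Theta(\Delta)$ (dominated by the array $f$ and the list of primes up to $O(\Delta)$), taking $\Delta = x^{2/3}$ is the optimal tradeoff, yielding $\mcutoff = \Theta(x^{1/3})$, total arithmetic time $O(x\log x)$, and space $O(x^{2/3})$.

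Finally, I would verify that the side conditions invoked along the way remain consistent under this choice of $\Delta$: we have $\Delta = x^{2/3} \gg M(x) = \Theta(\sqrt{x/\log x})$ as assumed at the start of the analysis, and $\mcutoff = \Theta(x^{1/3}) \ll x^{2/5}$, so the Atkin--Bernstein sieve with Galway's improvements applies on Line~\ref{line:smallprimes} as noted in the text. The main obstacle is really just careful bookkeeping; because the bound on $\mcutoff$ in Lemma~\ref{lemma:mcutoff} is an average-case statement, the resulting time bound is likewise an average-case statement, which is what the theorem asserts.
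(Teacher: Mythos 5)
Your proposal is correct and follows essentially the same route as the paper's proof: both isolate the small-$m$ cost $x\log \mcutoff + \mcutoffb\, x\log x/(2\Delta)$, eliminate $\mcutoff$ via Lemma~\ref{lemma:mcutoff} (the paper multiplies through by $\mcutoff$ and uses $\Delta\cdot\mcutoff=\Theta(x)$, you substitute $\mcutoff=\Theta(x/\Delta)$ directly --- the same algebra), and conclude $\Delta=x^{2/3}$, $\mcutoff=x^{1/3}$. Your closing checks that $\Delta\gg M(x)$ and $\mcutoff\ll x^{2/5}$ are consistent with the choices made, and the note that the bound is average-case, are welcome additions but do not change the argument.
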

\begin{proof}
From our discussion above,
everything outside processing small values of $m$
takes $O(x)$ time.

The equation to describe the asymptotic
running time, up to a constant multiplicative factor,
for small values of $m$, is
\begin{eqnarray*}
&&
\frac{x}{\Delta} 
\sum_{m=1}^{\mcutoff}\left(\frac{\Delta}{m}+ m \log x
+ m+\frac{\Delta}{m\cdot\log x}
\right) \\
&\sim& 
x \log \mcutoff+ \frac{\mcutoffb\cdot x\log x}{2\Delta}
+ \frac{\mcutoffb\cdot x}{2\Delta}+\frac{x\log \mcutoff}{\log x}\\
&\sim&
x \log \mcutoff+ \frac{\mcutoffb\cdot x\log x}{2\Delta}
.
\end{eqnarray*}
This argues for making $\mcutoff$ as small as possible,
but we have Lemma \ref{lemma:mcutoff}.
Multiplying the numerator and denominator of the
second term by $\mcutoff$ and writing 
$\Delta\cdot \mcutoff=\Theta( x)$ gives
$$
x \log \mcutoff + \mcutoffc \cdot \log x.
$$
We obtain that $\mcutoff=x^{1/3}$ and $\Delta=x^{2/3}$
and the proof is complete.
    
\end{proof}

On current hardware, $x^{2/3}$ space is problematic if we want to compute
  the histogram $h$ up to, say, $x=10^{15}$, as this means using $10^{10}$ space, 
  which is too much.
What happens if we allow our runtime to creep up - does this
  allow for less space use?
The answer is yes, as the runtime is inversely proportional to $\Delta$.
If we set $\Delta=x^{2/3-u}$, 
then by Lemma \ref{lemma:mcutoff} we have $\mcutoff=O(x/\Delta)=x^{1/3+u}$,
then the runtime becomes
$$
O\left(
x\log x + \frac{(x^{1/3+u})^2 x\log x}{x^{2/3-u}}
\right) = O( x^{1+3u}\log x).
$$
So for example, setting $\Delta=\sqrt{x}$, or $u=1/6$, gives a running time
of $O(x^{3/2}\log x)$.

    \section{The Priority Queue Algorithm\label{sec:pqalg}}

The main idea behind the \textit{priority queue} algorithm
is to generate all the chains with sums between $1$ and $x$,
our upper bound, and insert them into a priority queue
(a min-heap)
using their sums as their priority.
Thus, the front of the queue will have the chains of
smallest sums.
Then we can find all chains that sum to 1
from the front of the queue, 
counting them as we remove them from the priority queue,
and incrementing the histogram as appropriate. 
Rinse and repeat with $2,3,4,\ldots,x$.
This works, but there are too many chains to store all
at once -- $O(x)$ according to Moser.
However, observe that a particular value of $n$ 
has at most one representation of any given length.
This means we need only store one chain of each length
in the priority queue.
By Lemma \ref{lemma:M} we know that this is
 $O(\sqrt{x/\log x})$, much better than $O(x)$.
When a chain is removed from the priority queue, 
we simply perform a
$\slide$ and then insert it back into the queue.

This is precisely the idea in Michael S.~Branicky's
algorithm, described as Python code
from \cite[A054845]{OEIS} in 2022:
\begin{quote}\small
\begin{verbatim}
(Python) # alternate version for going to large n
import heapq
from sympy import prime
from itertools import islice
def agen(): # generator of terms
    p = v = prime(1); h = [(p, 1, 1)]; nextcount = 2; oldv = ways = 0
    while True:
        (v, s, l) = heapq.heappop(h)
        if v == oldv: ways += 1
        else:
            yield ways
            for n in range(oldv+1, v): yield 0
            ways = 1
        if v >= p:
            p += prime(nextcount)
            heapq.heappush(h, (p, 1, nextcount))
            nextcount += 1
        oldv = v
        v -= prime(s); s += 1; l += 1; v += prime(l)
        heapq.heappush(h, (v, s, l))
print(list(islice(agen(), 102))) # Michael S. Branicky, Feb 17 2022
\end{verbatim}
\end{quote}
The reader need not examine this Python program too closely,
as we will restate the algorithm, using the notation
in this paper, elaborating on details, and analyzing
the running time as we go.

\subsection{Priorty Queue Algorithm}

Let the interval $[x_1,x_2)$ be the input to the algorithm.
For ease of notation, let $\Delta:=x_2-x_1$.
As in the last section, the output is a histogram
$h$ where $h[k]=\#\{n: x_1\le n<x_2 \mbox{ and } f(n)=k\}$.

It is fine to keep in mind $[1,x+1)$ as the input for now,
but we are setting the stage for parallelization later.

\subsubsection{Setup}

\newcommand{\happy}{{\texttt{happy}}}
\newcommand{\happysum}{{\texttt{sum}}}
\newcommand{\happypmin}{{\texttt{pmin}}}
\newcommand{\happypmax}{{\texttt{pmax}}}

A chain object is stored as $3$ integers:
the smallest prime of the chain (\happypmin), 
the largest prime of the chain (\happypmax), 
and its sum (\happysum).
We store chain objects in an array, and the array position
indicates its length, so the length of the chain is
stored implicitly, and array position zero is left empty.

To create this array of chain objects,
we start by computing the longest chain $c$
that includes $2$, with maximal sum $<x_2$.
Let $M$ be the length of $c$.
Allocate the array $\happy[\,]$ of length $M+1$,
and set $\happy[M]:=c$.

We then fill the array from position $M-1$ down to $1$.
For long chains of length $m$, 
we copy the previous chain of length $m+1$, drop the 
largest prime, and then $\slide$ it as necessary
until its sum is minimal, but $\ge x_1$.
For short chains of length $m$, we estimate the largest prime of the
chain to be $x_1/m$ (see Lemma \ref{lemma:pm}),
find consecutive primes below that to get $m$ of them in total,
and then $\slide$ as necessary to get a sum that is
minimal but $\ge x_1$.

We prove below that if we split long from short chains 
at $x_2^{1/3}$, the number of $\slide$ operations it takes
to generate all the chains is $O(x_2^{2/3})$.
This also tells us that we want $x_2-x_1\gg x_2^{2/3}$
for the setup time to take negligible time compared to our main loop.
Unlike the puzzle algorithm, we do not need any space for the interval.
\begin{lemma}\label{lemma:pqsetup}
    A list of chains, one of each length from $1$ to $M(x_2)$,
    where the chain of length $m$ has minimal sum $\ge x_1$,
    can be constructed in a total number of $\slide$ operations
    that does not exceed $O(\sqrt{x_2/\log x_2}+x_1^{2/3})$.
\end{lemma}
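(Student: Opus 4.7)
The plan is to partition the chain list construction at the cutoff $m_0 := x_2^{1/3}$, calling chains with length $m > m_0$ \emph{long} and those with $m \le m_0$ \emph{short}, and then add the cost of building the initial length-$M$ chain to the sum of the per-chain costs in each regime.

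I would first build the length-$M$ chain greedily by starting from $2$ and appending consecutive primes until the sum is maximal below $x_2$; by Lemma \ref{lemma:M} this uses $O(M) = O(\sqrt{x_2/\log x_2})$ operations, which account for the first term of the bound. For each long chain of length $m$, I would descend from the previously built chain of length $m+1$ by dropping its largest prime (of magnitude $\Theta(x_1/m)$ by Lemma \ref{lemma:pm}) and then sliding forward until the sum returns to at least $x_1$. Bounding each slide's effect by $\Theta(m\log(x_1/m))$ via Lemma \ref{lemma:slide}, the length-$m$ chain requires $O(x_1/(m^2\log x_1))$ slides, and the tail bound $\sum_{m > m_0} 1/m^2 = O(1/m_0)$ yields a long-chain total of $O(x_1/(x_2^{1/3}\log x_1)) = O(x_1^{2/3}/\log x_1)$.

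For each short chain of length $m$, descending from length $m+1$ is wasteful, so instead I would apply Lemma \ref{lemma:pm} to locate the chain's largest prime at roughly $x_1/m$, collect the $m$ consecutive primes ending just below that estimate, and slide forward into place. Each short chain needs at most $O(m)$ slides since its initial sum lies within $O(m^2\log(x_1/m))$ of the target $x_1$; summing $\sum_{m \le m_0} m = O(m_0^2) = O(x_2^{2/3})$ gives the short-chain contribution, which combines with the long-chain cost to fall inside $O(x_1^{2/3})$ in the regime where $x_1$ is comparable to $x_2$, which is the regime for which the stated bound is tight.

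The main technical obstacle is verifying that the slide-forward-only descent in the long regime actually produces the \emph{minimal}-sum chain of length $m$, rather than overshooting some smaller valid chain; this follows because dropping the largest prime of a length-$(m+1)$ chain pushes the sum strictly below $x_1$ by $\Theta(x_1/m)$ (and indeed below $x_1 - p_{a-1}$ using the chain's old leftmost prime), so the first sum we encounter that is at least $x_1$ is indeed minimal. The remaining work is just balancing the long-chain cost $x_1/m_0$ against the short-chain cost $m_0^2$, which is optimized at $m_0$ proportional to $x_1^{1/3}$ (within a constant factor of the stated cutoff $x_2^{1/3}$), yielding the claimed $O(\sqrt{x_2/\log x_2} + x_1^{2/3})$ bound.
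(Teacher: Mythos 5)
Your proposal matches the paper's proof in all essentials: build the length-$M$ chain in $O(M)$ operations, derive each long chain from the previous one by dropping its largest prime and sliding forward to the minimal sum $\ge x_1$, build each short chain from scratch in $O(m)$ operations, and balance $x_1/m_c$ against $m_c^2$ to get the cutoff $m_c = x_1^{1/3}$ (which is also what the paper's proof uses, despite the $x_2^{1/3}$ appearing in the surrounding prose). The only cosmetic difference is that the paper counts the long-chain slides exactly as a telescoping sum $\sum_m \bigl(1+\pi(p(m,x_1))-\pi(p(m+1,x_1))\bigr)=O(M+x_1/m_c)$ rather than via the average slide increment of Lemma~\ref{lemma:slide}, but both yield the same bound.
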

\begin{proof}
    Let $m_c:=x_1^{1/3}$ be the cutoff length between long and short chains
    as described above.

    The first chain of length $M(x_2)$ can be made with
    $M(x_2)=O(\sqrt{x_2/\log x_2})$ $\slide$ operations or the equivalent
    by Lemma \ref{lemma:M}.

    Each subsequent long chain of length $m$ is created from the previous chain
    of length $m+1$ as described above, and this takes
    $$1+\pi(p(m,x_1))-\pi(p(m+1,x_1))$$
    $\slide$ operations, where $\pi(x)$ is the prime counting function.
    This telescopes when summed, giving 
    \begin{eqnarray*}
        \sum_{m=m_c}^{M(x_2)} (1+\pi(p(m,x_1))-\pi(p(m+1,x_1)))
    &\le& O(M)+\pi(p(m_c,x_1))-\pi(p(M,x_1)) \\
    &=& O(M+x_1/m_c)
    \end{eqnarray*}
    by Lemma \ref{lemma:pm} and the prime number theorem.

    A short chain of length $m$ needs $m$ primes near $x_1/m$,
    the equivalent of $O(m)$ $\slide$ operations.
    Summing over all short lengths $m$ gives $O(m_c^2)$ total.

    We then optimize the cost $O(M+x_1/m_c+m_c^2)$ giving $m_c=x_1^{1/3}$
    and our proof is complete.
\end{proof}

We conclude the setup by inserting the integers $1$ to $M$
into the priority queue, where the priority of the
integer $i$ is $\happy[i].\happysum$.

Later, we discuss two possible implementations of the
priority queue: as a binary heap, or using a hash table.
In either case, the initial queue can be constructed
in $O(M)$ time: We can use a \textit{build} operation
for the heap, or $O(M)$ hash table insertions of chain objects,
depending on our choice for data structure.

\subsubsection{Main Loop}

We describe the main loop of our algorithm below
(Algorithm \ref{alg:pq}).

\begin{algorithm}[ht]
    \label{alg:pq}
    \caption{Main Loop for the Priority Queue Algorithm}
    \For{ $n:=x_1$ to $x_2-1$}{
        $f:=0$\;
        \While{there are objects with priority $n$ in the priority queue:}{ \label{line:pqwhile}
            Let $c$ be the chain object at the front of the queue with $c.\happysum=n$\;
            Remove $c$ from the priority queue\;
            Increment $f$\;
            $c:=\slide(c)$\;
            \lIf{$c.\happysum<x_2$}{ Insert $c$ back into the priority queue;}
        }
        Increment $h[f]$\;
    }
\end{algorithm}

The inner While-loop, which starts at Line \ref{line:pqwhile}, iterates once per chain, which is once per
gleeful representation, which is linear, or $O(\Delta)$ on average,
over the course of the algorithm, due to Moser's theorem.
We have one delete-Min or dequeue operation, at most one insertion, and one $\slide$.

If we use a hash table to store the priority queue (where the key is the chain sum),
insertions and deletions are constant time on average.
If we use a standard binary heap, these operations take $O(\log M)$ time each.
As $M<\sqrt{x_2}$, this is $O(\log x_2)$, for a total of
$O(\Delta\log x_2)$ time for the interval.

Next, we discuss the cost of $\slide$ operations.
If a list of primes is available, we can store the index of the prime
in our chain objects, and then finding the next prime (for both
\happypmax\ and \happypmin) takes constant time.
However, we need primes as large as $x_2$, and finding all primes up to $x_2$
takes too much time, and storing them takes too much space.
But we can pick a bound $B$, and compute and store all primes up to $B$.
For primes needed beyond $B$, we can generate them either using an
incremental prime sieve (see \cite{Sorenson2015}) or using prime tests,
with some support to make this as efficient as possible (see \cite{Sorenson06}).

Let us now derive optimal values for $B$, based on whether we use
an incremental sieve or prime tests.

We begin with an incremental sieve.
Galway \cite{Galway2000} showed how to modify the Atkin-Bernstein sieve \cite{AB2004}
to find primes in an interval $[a,b]$ using at most $O(b-a+b^{1/3})$ time
and $O(b^{1/3})$ space.  This sieve can, theoretically, be constructed to
generate the primes as needed \cite{Sorenson2015}.
We divide the chains stored in our priority queue into those that use primes
up to $B$, and those that use this incremental sieve to generate needed primes.
Thus, each chain object that uses the sieve, would have attached to it its own
copy of an Atkin-Bernstein-Galway incremental sieve to generate primes as needed
for sliding.
We now prove that the optimal value for $B$ is $x_2^{3/5}$ in this case.

\begin{theorem}\label{thm:pqsieve}
After setup,
    the priority queue algorithm takes $O(\Delta\log x_2+x_2^{3/5})$ time
    to find and count all representations of integers in the interval
    $[x_1,x_2)$ where $\Delta=x_2-x_1$.
    The space used is $O(x_2^{3/5})$.
\end{theorem}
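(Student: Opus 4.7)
The plan is to choose a threshold $B$ that splits the chains in the priority queue according to whether their largest prime can exceed $B$, and to balance the space for a fixed prime table against the space for the private Atkin-Bernstein-Galway sieves attached to the remaining chains. By Lemma \ref{lemma:pm}, a chain of length $m$ has $\happypmax = O(x_2/m)$ throughout the interval, so only chains with $m = O(x_2/B)$ can ever request primes exceeding $B$; these are exactly the chains that carry a private sieve. I will then bound the runtime by combining Moser's average theorem (to count while-loop iterations) with an amortized analysis of sieve regeneration.

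For space, I would first precompute every prime up to $B$ in a flat table using a linear-time sieve such as Eratosthenes with a wheel, costing $O(B)$ bits and $O(B)$ time. Each of the at most $O(x_2/B)$ chains carrying a private sieve occupies $O((x_2/m)^{1/3})$ bits. Summing,
\[
\sum_{m=1}^{\lceil x_2/B\rceil} (x_2/m)^{1/3} = O\!\bigl(x_2^{1/3}(x_2/B)^{2/3}\bigr) = O(x_2/B^{2/3}).
\]
Balancing $B$ against $x_2/B^{2/3}$ forces $B = x_2^{3/5}$, yielding the claimed $O(x_2^{3/5})$ space bound. The same sum measures the time to initialize all private sieves, also $O(x_2^{3/5})$, which accounts for the additive time term.

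For the main loop, the while loop at Line \ref{line:pqwhile} fires exactly once per gleeful representation whose sum lies in $[x_1,x_2)$, which Moser's theorem (average of $f$ is $\log 2$) bounds by $O(\Delta)$. Each iteration performs $O(1)$ binary-heap operations of cost $O(\log M) = O(\log x_2)$ by Lemma \ref{lemma:M}, plus a single $\slide$. A $\slide$ on a long chain is $O(1)$ since both boundary primes live in the precomputed table. A $\slide$ on a short chain of length $m$ requests one fresh large prime from its private sieve; a sieve block of size $(x_2/m)^{1/3}$ takes $O((x_2/m)^{1/3})$ time and releases $\Theta((x_2/m)^{1/3}/\log x_2)$ primes, for an amortized cost of $O(\log x_2)$ per prime, which is absorbed into the $O(\log x_2)$ already charged per iteration. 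Hence the main loop costs $O(\Delta \log x_2)$, and the theorem follows.

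The main obstacle will be confirming that the amortized sieve analysis actually delivers on-demand primes without gaps or redundancy: Lemmas \ref{lemma:pm} and \ref{lemma:slide} must be invoked to show that, for each chain of length $m$, successive calls to its private sieve march monotonically through the relevant range near $x_2/m$, so that no sieve block is ever recomputed and each block truly yields $\Theta((x_2/m)^{1/3}/\log x_2)$ usable primes. Granted that, the space and time bounds assemble directly from the two sums above.
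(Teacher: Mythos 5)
Your proposal is correct and follows essentially the same route as the paper's proof: precompute primes up to $B$, attach a private Atkin--Bernstein--Galway sieve of size $O((x_2/m)^{1/3})$ to each short chain, bound the total sieve space by the sum $\sum_m (x_2/m)^{1/3}$, and charge $O(\log x_2)$ amortized per $\slide$ on short chains together with $O(\log M)=O(\log x_2)$ heap operations over the $O(\Delta)$ representations guaranteed by Moser's theorem. The only difference is cosmetic: you derive $B=x_2^{3/5}$ by balancing $B$ against $x_2/B^{2/3}$, whereas the paper fixes $B:=x_2^{3/5}$ at the outset and verifies the bounds; the monotonicity concern you flag is handled implicitly because $\slide$ only ever advances $\happypmax$ forward.
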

In practice, we want $\Delta\gg x_1^{2/3}$ so that the setup cost
is negligible.
Thus, for the interval $(1,x]$ we get a running time of $O(x\log x)$
using $O(x^{3/5})$ space.
We can parallelize this keeping an optimal amount of work using
up to $O(x^{1/3})$ processors, where each processor gets its own interval
of length $x^{2/3}$.
\begin{proof}
  Set $B:=x_2^{3/5}$.
    The algorithm starts by finding all primes up to $O(B)$.
    Any linear-time prime sieve is fine for this.
    The primes can be stored in $O(B)$ bits.

    By Lemma \ref{lemma:pqsetup} the time for initializing the priority queue
    is negligible.

    For $\slide$ operations on chains whose primes fall below $B$, $\slide$
    operations take constant time.  
    No extra space beyond the $O(B)$ to store the prime list is required.
    Let $m$ be the length of any such chain.  Then $p(m,x_2)\le B$.
    By Lemma \ref{lemma:pm}, we have $p(m,x_2)=\Theta(x_2/m)$, giving
    $m\ge \Theta(x_2/B) = \Theta(x_2^{2/5})$.

    Chains of length $m$ with  $m\le\Theta(x_2^{2/5})$, then, must have an
    incremental sieve attached.
    Each one uses space $O( p(m,x_2)^{1/3})$.  Again using Lemma \ref{lemma:pm},
    the total space used is proportional to
    $$
    \sum_{m=1}^{x_2^{2/5}} \left(\frac{x_2}{m}\right)^{1/3}
    = O(x_2^{1/3} \cdot (x_2^{2/5})^{2/3}) = O(x_2^{3/5}).
    $$
    For these shorter chains, $\slide$ operations take, on average, $O(\log x_2)$
    time each, as the Atkin-Bernstein-Galway sieve takes linear time to find
    primes up to a given bound, and primes are, on average, $O(\log x_2)$ apart.
\end{proof}
    If an overwhelming majority of chains are long, we might pull the average
    cost per $\slide$ below $\log x_2$, but this is not the case.
    The number of short chains (for the input $[1,x)$) is
    \begin{eqnarray*}
    \sum_{m=1}^{x^{2/5}} \pi(x/m)&\sim&
    \sum_{m=1}^{x^{2/5}} \frac{x}{m\log(x/m)} \\
    &=&
    \Theta\left(\sum_{m=1}^{x^{2/5}} \frac{x}{m\log x}\right)
    = \Theta\left(\frac{x}{\log x} \cdot \log(x^{2/5})\right)=\Theta(x).
    \end{eqnarray*}
    Given the running time, either data structure, binary heap or
    hash table, is reasonable for this approach.

Next, we consider generating primes as needed using a prime test.
In particular, given a starting point $n$, we find the smallest prime
larger than $n$ as follows:
\begin{itemize}
    \item If $n<B$ we use the precomputed list of primes to find the next
      prime in constant time.
      This implies storing prime indices instead of prime values.
      We distinquish indices from prime values by storing indices as negative
      numbers (so the prime $p_3=5$ is stored as $-3$).
    \item For larger $n$ we follow the ideas in \cite{Sorenson06}:
    \begin{enumerate}
        \item Sieve an interval of size $2\log n$ using primes up to $\log n$.
          This takes $O(\log n)$ time leaving $O(\log n/\log\log n)$ possible
          primes in the interval $(n,n+2\log n]$.
        \item Scan this interval for a possible prime, and then perform a
          base-$2$ strong pseudoprime test.
        \item If the test is passed, the number is likely prime, so do a prime test.
          In theory, we use a pseudosquares prime test which takes
          $O((\log n)^2)$ time in theory.
          In practice, we could use a Baille-PSW prime test, which works on
          64-bit integers, or a Miller-Rabin style test on a fixed, short list of
          appropriate bases.
    \end{enumerate}
    This is repeated as needed until a prime is found.
\end{itemize}
The average time to find a prime is dominated by the cost of the final prime test,
at $O((\log n)^2)$ time.
In practice, we used a fixed wheel of size 30, performed some fixed trial
divisions, and then used the Miller-Rabin test as described above,
taking something closer to $O((\log n)^2/\log\log n)$ heuristic time.

We also need a previous prime function, but the cost is the same.
We leave the details to the reader.
\begin{theorem}\label{thm:pqtest}
After setup,
    the priority queue algorithm takes $O(\Delta(\log x_2)^2)$ time
    to find and count all representations of integers in the interval
    $[x_1,x_2)$ where $\Delta=x_2-x_1$.
    The space used is $O(\sqrt{x_2\log x_2})$.
\end{theorem}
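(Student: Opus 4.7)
The plan is to mirror the proof of Theorem \ref{thm:pqsieve} but account for two changes: (i) there are no per-chain incremental sieves to store, and (ii) each $\slide$ is more expensive because next/previous-prime calls now require a prime test rather than a constant-time lookup. The argument splits cleanly into a time bound and a space bound, both of which rest on Lemma \ref{lemma:M}, Lemma \ref{lemma:pqsetup}, and the linear total-representation count (Moser's theorem).

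For the time bound, I would first invoke Lemma \ref{lemma:pqsetup} to absorb the setup phase into the main loop, as the theorem is stated post-setup. Then I would argue as in Theorem \ref{thm:pqsieve} that, by Moser's theorem, the total number of chain sums landing in $[x_1,x_2)$ is $\Theta(\Delta)$, so the inner while-loop body of Algorithm \ref{alg:pq} executes $\Theta(\Delta)$ times over the run. Each iteration performs one heap pop, at most one heap push, and one $\slide$. The heap operations cost $O(\log M)=O(\log x_2)$ each by Lemma \ref{lemma:M}. The $\slide$ requires one next-prime and one previous-prime call; under the sieve-then-test scheme described before the theorem (small-prime sieving of a window of length $\Theta(\log n)$, base-$2$ strong pseudoprime filtering, then a deterministic pseudosquares-style certification), each prime lookup costs $O((\log x_2)^2)$ arithmetic operations in the unit-cost model. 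Summing gives $O(\Delta(\log x_2)^2)$, which absorbs the $O(\Delta\log x_2)$ heap cost.

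For the space bound, the dominant structure is the priority queue, which holds at most one chain object per length $1\le m\le M$. Each chain object is a constant number of $O(\log x_2)$-bit fields (namely \happypmin, \happypmax, and \happysum, or their index surrogates), so the queue occupies $O(M\log x_2)=O(\sqrt{x_2/\log x_2}\cdot\log x_2)=O(\sqrt{x_2\log x_2})$ bits by Lemma \ref{lemma:M}. The only auxiliary prime table we genuinely need is the trial-division list up to $\log x_2$, which is lower order; any convenience table of primes up to a bound $B\le\sqrt{x_2\log x_2}$ can be included without exceeding the stated budget, and nothing in the time argument requires $B$ to be large. Scratch space for the sieve window of size $\Theta(\log x_2)$ inside each prime call is also negligible.

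The main obstacle I anticipate is cleanly justifying the $O((\log x_2)^2)$ average cost per prime lookup: one must bound the expected number of strong-pseudoprime failures before a true prime is reached in the sieved window and then pin down the deterministic certifier's cost (in the model where $O(\log x)$-bit arithmetic is unit cost, a single modular exponentiation is $O(\log n)$, so the $(\log n)^2$ figure comes from the certifier, not the filter). The remaining pieces — Moser's linear representation count, the $M=\Theta(\sqrt{x_2/\log x_2})$ heap-size bound, and the Lemma \ref{lemma:pqsetup} absorption of setup — are straightforward bookkeeping on top of results already established.
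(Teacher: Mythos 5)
Your proposal is correct and follows essentially the same route as the paper's own (much terser) proof: $O(\Delta)$ slide operations by Moser's theorem, each costing $O((\log x_2)^2)$ for the deterministic prime test, with space dominated by the $O(M)$ chain objects of $O(\log x_2)$ bits each, giving $O(\sqrt{x_2\log x_2})$ via Lemma \ref{lemma:M}. The extra care you take over the precomputed prime table's size and the average cost of a prime lookup is bookkeeping the paper delegates to the discussion preceding the theorem, not a different argument.
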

\begin{proof}
    After setup, the number of $\slide$ operations is $O(\Delta)$.
    From the discussion above, using the pseudosquares prime test we can
    perform each $\slide$ in $O((\log x_2)^2)$ time.
    The only significant use of space is to store the $O(M)$ chain objects,
    each of which uses $O(\log x_2)$ bits.
    This completes the proof.
\end{proof}

    \section{Computational Results\label{sec:comp}}
Several different practical implementations of the algorithms presented in \S\ref{sec:puzzlealg} and \S\ref{sec:pqalg} were created and run over several upper bounds, and we present both their results and timing information below. Values of the histogram function $h(k,x)$, where $x$ ranges over powers of ten\footnote{The authors are aware that Tables \ref{TAB:FREQDATA} and \ref{TAB:FISH} uses $x=10^{14}+145300$ in place of $x=10^{14}$. This will be corrected in a future revision of this paper.}, are given in Table \ref{TAB:FREQDATA}. We also give the average density of $f$ on these intervals, in Figure \ref{FIG:AVGREPS}.

\begin{table}
    \centering
    \begin{tabular}{c|rrrrrrrr}
    Count & $10^{2}$ & $10^{3}$ & $10^{4}$ & $10^{5}$ & $10^{6}$ & $10^{7}$ & $10^{8}$ & $10^{9}$ \\
    \hline
    0 & 46 & 520 & 5191 & 51462 & 518001 & 5205110 & 52209546 & 522955756 \\
    1 & 38 & 310 & 3290 & 34538 & 344100 & 3427038 & 34146573 & 340693986 \\
    2 & 14 & 140 & 1213 & 11236 & 111132 & 1099545 & 10950371 & 109272550 \\
    3 & 2 & 28 & 275 & 2396 & 22916 & 228659 & 2292360 & 23003362 \\
    4 & 0 & 0 & 29 & 323 & 3409 & 35009 & 353614 & 3584873 \\
    5 & 0 & 2 & 2 & 44 & 403 & 4197 & 42946 & 440748 \\
    6 & 0 & 0 & 0 & 1 & 37 & 412 & 4205 & 44623 \\
    7 & 0 & 0 & 0 & 0 & 2 & 28 & 356 & 3793 \\
    8 & 0 & 0 & 0 & 0 & 0 & 2 & 28 & 299 \\
    9 & 0 & 0 & 0 & 0 & 0 & 0 & 1 & 10 \\

    \hline
    \end{tabular}\vspace*{2ex}
    \begin{tabular}{c|rrrrr}
    Count & $10^{10}$ & $10^{11}$ & $10^{12}$ & $10^{13}$ & $10^{14}$ \\
    \hline
    0 & 5232998716 & 52334124696 & 523187120742 & 5229093083950 & 52255406559014 \\
    1 & 3402573440 & 34000179027 & 339871838389 & 3398259361550 & 33983734548218 \\
    2 & 1092269461 & 10929758863 & 109433799948 & 1096096081344 & 10980796355362 \\
    3 & 230955228 & 2319354726 & 23298396623 & 234059213343 & 2351331657325 \\
    4 & 36207780 & 365678388 & 3691321145 & 37239962166 & 375496312243 \\
    5 & 4493266 & 45727635 & 464500544 & 4710770197 & 47717060499 \\
    6 & 458971 & 4726851 & 48377037 & 493740792 & 5027735200 \\
    7 & 39944 & 415858 & 4291580 & 44107969 & 451927961 \\
    8 & 2980 & 31743 & 330053 & 3428047 & 35376934 \\
    9 & 204 & 2108 & 22539 & 235471 & 2452073 \\
    10 & 10 & 101 & 1332 & 14382 & 151480 \\
    11 & 0 & 4 & 66 & 757 & 8546 \\
    12 & 0 & 0 & 2 & 31 & 430 \\
    13 & 0 & 0 & 0 & 1 & 14 \\
    14 & 0 & 0 & 0 & 0 & 1 \\
    \hline
    \end{tabular}
    \caption{Frequency data up to $10^{14}$}
    \label{TAB:FREQDATA}
\end{table}

\begin{figure}
    \centering
    \includegraphics[height=3in]{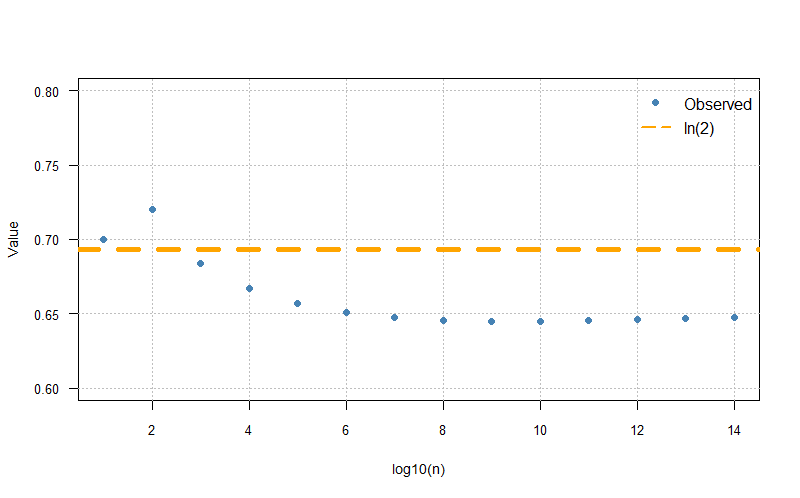}
    \caption{Average number of representations below $n$}
    \label{FIG:AVGREPS}
\end{figure} 

The puzzle algorithm from \S\ref{sec:puzzlealg} was implemented using Miller-Rabin deterministic primality testing, and ran in parallel on 560 AMD EPYC9734 cores at 2.20GHz. The interval $[10^{13},10^{14}]$ took 3 weeks, and the program needed 4 cumulative days to compute the interval $[1,10^{13}]$. We estimate that it would take on the order of one year to run this implementation on the interval $[10^{14},10^{15}]$, where we can heuristically expect to find the first integer with 15 gleeful representations.

The priority queue algorithm from \S\ref{sec:pqalg} was implemented using the C++ standard library \texttt{unordered\_map} as a hash table, and generated primes using an incremental sieve based on the sieve of Eratosthenes. This program ran on a single thread of an Intel Xeon E5-2650 v4 at 2.20GHz, running with an upper bound of $x=10^{12}$ and taking just over 26 days to terminate. Lower-order results include taking 3 days for $x=10^{11}$ and 6 hours cumulative for all smaller powers of ten.

We also created a version of the priority queue algorithm using a custom min-heap implementation with primality testing,  storing chain objects indexed by length in an array. The authors discussed splitting the min-heap into two separate structures, one to store short chains and the other to store long chains. This was of interest, since one could cache the short-chain array for a possible run-time improvement. We can estimate the total work done in a heap by integrating the number of gleeful representations of a specific length over the lengths included in the heap, so the short heap would require
\begin{equation}
    \label{EQN:BOUNDS}
    \sum_{m=1}^{k}\pi\left(\frac{x}{m}\right)\sim\int_{1}^{k}\frac{x}{m\log(x/m)}\mathrm{d}m=x\log\big\vert\log x-\log m\big\vert_{m=k}^{1}
\end{equation}
amount of work with cutoff $k$. In Moser's original paper, he used $k=\sqrt{x/\log x}$ as the bound on the maximum possible length of a gleeful number with sum $\leq x$ to get $x\log 2$ total representations. For the purposes of our work, if $k=(x/\log x)^c$ for some $c\in[0,1/2]$, then (\ref{EQN:BOUNDS}) simplifies to $-x\log(1-c+\frac{c\log\log x}{\log x})$. We want to have reasonable hope of a cache hit for our short chains, where the proportion of work done in cache is $-\log(1-c+\frac{c\log\log x}{\log x})/\log 2$. For illustration, if we wanted to get a cache hit on half of every update cycle when running with upper bound $x=10^{15}$, we would need to hold $x^{0.326}$ (where $c=(1-2^{-0.5})/(1-\frac{\log\log 10^{15}}{\log 10^{15}})$) chains, about 78000, in cache, well beyond the scope of our machines. For this reason, the separate heap structures were never implemented.

\subsection{Moser's Questions}
\label{sec:moser}
Recall the questions Moser posed in his 1963 paper:
\begin{quote}
    Since the average value of $f(n)$ is $\log 2$ it follows that $f(n)=0$ infinitely often. The following problems, among others, suggest themselves:
    \begin{enumerate}
        \item Is $f(n)=1$ infinitely often?
        \item Is $f(n)=k$ solvable for every $k$?
        \item Do the numbers for which $f(n)=k$ have a density for every $k$?
        \item Is $\limsup_{n\to\infty} f(n)=\infty$?
    \end{enumerate}
\end{quote}

In the spirit of Cram\'er's model on the distribution of primes, the frequencies in Table \ref{TAB:FREQDATA} give rise to a conjecture on a probabilistic model for Moser's function $f$. Namely, although not a true random probability distribution, $f(n)$ is roughly Poisson with mean $\lambda = \log 2$. Compare the histogram data for $x=10^{14}$ (restated for convienence) with the expected counts of a Poisson variable (Table \ref{TAB:FISH}). In this spirit, we can heuristically answer in the affirmative to Moser's four questions. 

\begin{table}
    \centering
    \begin{tabular}{c|rr}
    $k$ & $h(k)$ & $n\mathrm{Pr}[X=k]$ \\ \hline
    0 & 52255406573294 & 50000000080183 \\
    1 & 33983734548972 & 34657359083576 \\
    2 & 10980796355393 & 12011325367217 \\
    3 & 2351331657326 & 2775205437692 \\
    4 & 375496312243 & 480906456153 \\
    5 & 47717060499 & 66667790839 \\
    6 & 5027735200 & 7701765209 \\
    7 & 451927961 & 762636691 \\
    8 & 35376934 & 66077434 \\
    9 & 2452073 & 5089043 \\
    10 & 151480 & 352746 \\
    11 & 8546 & 22228 \\
    12 & 430 & 1284 \\
    13 & 14 & 68 \\
    14 & 1 & 3 \\ \hline
    \end{tabular}
    \caption{$h(k)$ is heuristically Poisson with mean $\lambda=\log 2$}
    \label{TAB:FISH}
\end{table}

In particular interest to Moser's second and fourth questions, OEIS A054859 lists the smallest integer having exactly $k$ representations. Prior to our work, this sequence was known to $k=13$; we present the first known integer with 14 representations as a sum of consecutive primes. Table \ref{TAB:OEIS} gives the complete updated OEIS table, and Table \ref{TAB:14reps} gives the 14 representations for our submitted integer. It is particularly interesting to note that 11 of the 14 records in Table \ref{TAB:OEIS} are themselves prime.

\begin{table}
    \centering
    \begin{tabular}{c|r}
        $k$ &                    $n$ \\ \hline
          0 &                      1 \\
          1 &                      2 \\
          2 &                      5 \\
          3 &                     41 \\
          4 &                 1\,151 \\
          5 &                    311 \\ 
          6 &	               34\,421 \\
          7 &        	      218\,918 \\
          8 &            3\,634\,531 \\
          9 &           48\,205\,429 \\
         10 &       1\,798\,467\,197 \\
         11 &      12\,941\,709\,050 \\ 
         12 &     166\,400\,805\,323 \\
         13 &  6\,123\,584\,726\,269 \\
         14 & 84\,941\,668\,414\,584 \\ \hline            
    \end{tabular}
    \caption{Smallest $n$ with $f(n)=k$ representations}
    \label{TAB:OEIS}
\end{table}

\begin{table}
    \centering
    \begin{tabular}{r|rr}
             $\ell$ &             $p_{\min}$ &             $p_{\max}$ \\ \hline
        2\,117\,074 &           21\,797\,833 &           58\,785\,359 \\
           361\,092 &          231\,753\,581 &          238\,710\,779 \\
           288\,268 &          291\,853\,531 &          297\,473\,801 \\
           199\,390 &          424\,030\,259 &          427\,989\,799 \\
           112\,544 &          753\,590\,641 &          755\,886\,067 \\
            73\,026 &       1\,162\,407\,049 &       1\,163\,930\,791 \\
            68\,854 &       1\,232\,927\,929 &       1\,234\,369\,457 \\
                296 &     286\,965\,092\,209 &     286\,965\,099\,727 \\
                294 &     288\,917\,235\,553 &     288\,917\,243\,497 \\
                206 &     412\,338\,193\,609 &     412\,338\,198\,731 \\
                146 &     581\,792\,247\,697 &     581\,792\,251\,207 \\
                 86 &     987\,693\,817\,667 &     987\,693\,819\,859 \\
                 26 &  3\,266\,987\,246\,389 &  3\,266\,987\,247\,019 \\
                  2 & 42\,470\,834\,207\,273 & 42\,470\,834\,207\,311 \\ \hline
    \end{tabular}
    \caption{The 14 representations for $g=84\,941\,668\,414\,584$.}
    \label{TAB:14reps}
\end{table}

With regard to Moser's first and third questions, we can conjecture that the density of numbers $n$ for which $f(n)=k$ follows the Poisson distribution's density: asymptotically $\frac{(\log 2)^k}{2\cdot k!}$. Indeed, we expect half of the integers will have no representation as a sum of consecutive primes, $\log 2/2$ (roughly 1/3) of the integers will have exactly one representation, and so forth.


    \section{Acknowledgements\label{SEC:ACKNOWLEDGE}}
	This work was supported in part by a grant from the Holcomb Awards Committee. 
    We thank Frank Levinson for his generous support
    of Butler's research computing infrastructure.

    Thanks to Ankur Gupta and Jonathan Webster for several
    helpful discussions.

    The second author was an undergraduate student when this work
    was done.

 \bibliographystyle{plain}

\end{document}